\documentclass[a4paper,11pt]{article}
\usepackage[utf8x]{inputenc}
\usepackage{amssymb}
\usepackage{amsthm}
\usepackage{amsmath}
\usepackage[english]{babel}
\usepackage{accents}
\usepackage{amsfonts}
\usepackage{textcomp}
\usepackage{stmaryrd}
\usepackage{tikz}
\usepackage{booktabs}
\usepackage{url}
\usepackage[scaled=0.90]{helvet}
\usepackage{graphicx}

\usepackage{authblk}

\graphicspath{ {/home/stefan/Pictures/} }

\newtheorem{thm}{Theorem}
\newtheorem{Claim}[thm]{Claim}
\newtheorem{Fact}[thm]{Fact}

\newtheorem{Definition}[thm]{Definition}

\newcommand{\ZFC}{\mathsf{ZFC}}
\newcommand{\ZF}{\mathsf{ZF}}

\newcommand{\DC}{\mathsf{DC}}
\newcommand{\AD}{\mathsf{AD}}

\newcommand{\GCH}{\mathsf{GCH}}

\newcommand{\forceQ}{\mathbb{Q}}
\newcommand{\forceP}{\mathbb{P}}
\newcommand{\forceR}{\mathbb{R}}


\newcommand{\secret}[1]{}
\secret{Versteckter Text}

\title{A model with no ordinal-definable stationary, costationary subset of $\omega_1$}
\author{Stefan Hoffelner\footnote{The author was supported by FWF-GA\v CR grant no. 
17-33849L, Filters, ultrafilters and connections with forcing}}
\affil{Charles University Prague}
\begin{document}

\maketitle

\begin{abstract}
It is shown that the existence of a measurable cardinal is equiconsistent to 
a model of $\ZFC$ in which there is no ordinal-definable, stationary, costationary
subset of $\omega_1$
\end{abstract}

\section{Introduction}

The following short note has its origin in a question of B. Farkas who asked the author whether there is a model of 
$\ZFC$ in which every ordinal definable subset of $\omega_1$ is either club-containing or nonstationary.
It is a trivial observation that for any regular $\kappa > \omega_1$ there are always
ordinal-definable, stationary, co-stationary subsets of $\kappa$, e.g. the ordinals 
below $\kappa$ of some fixed cofinality $< \kappa$. This does not apply 
to $\omega_1$ of course, thus the question seems very natural.
 
By work of R. Solovay from the late 1960's (see \cite{Jech} Theorem 33.12), the axiom of determinacy implies that the 
club filter on $\omega_1$ is an ultrafilter. The axiom of determinacy is however 
inconsistent with the axiom of choice. M. Spector (\cite{Spector}) showed that $\ZF$ + ``the club filter on $\omega_1$ is an
ultrafilter'' is in fact equiconsistent with $\ZF$, but due to the lack of choice, the club filter is not $\sigma$-closed, 
even though the intersection of countably many club sets is still club. If one wants a model for
$\ZF$ + ``the club filter on $\omega_1$ is an ultrafilter'' + $\DC$, it is proved by W. Mitchell (see \cite{Mitchell})
that a measurable cardinal $\kappa$ with Michell rank $o(\kappa)= \kappa^{++}$ is an upper bound in terms of 
consistency strength. Note that all these results talk about models in which choice fails.

On the other hand R. Solovay (\cite{Solovay}) showed that any stationary set on a regular cardinal
$\kappa$ can be partitioned into $\kappa$-many stationary sets, its proof however 
relies heavily on the axiom of choice and does not allow in general any ordinal definable description
of this partition. It implies nevertheless that models of $\ZFC$ with a definable wellorder on $P(\omega_1)$
allow definable, stationary, costationary subsets of $\omega_1$.

Another observation uses Woodin's $\forceP_{max}$-forcing (see \cite{Larson} for details). If we assume that $\AD$ holds in $L(\forceR)$,
then, as already mentioned above, the club filter on $\omega_1$ is an ultrafilter. Forcing over $L(\forceR)$ with $\forceP_{max}$ produces a model $L(\forceR)[G]$ which
is in fact a model of $\ZFC$. Additionally $\forceP_{max}$ is $\sigma$-closed, and will not add 
new ordinal-definable subsets of $\omega_1$ (as it is weakly homogeneous, see the definition below),
thus $L(\forceR)[G]$ is a model of $\ZFC$ in which there is no ordinal-definable, stationary, co-stationary
set as all possible candidates would belong already to $L(\forceR)$ where the club filter is an ultrafilter.
However this argument uses the existence of infinitely many Woodin cardinals which is 
much more than actually needed.

We make use of two wellknown Inner models of $\ZFC$, $HOD$ and $L[U]$ for 
$U$ a normal measure on a measurable cardinal $\kappa$.
Recall that a set $X$ is ordinal definable if there is 
a formula $\varphi(v_0,...,v_n)$ and ordinals $\alpha_1,...,\alpha_n$ such that
$$x \in X \text{ iff } \varphi(x, \alpha_1,...,\alpha_n) \text{ is true. }$$
Then $HOD$, the class of hereditarily ordinal-definable sets is
defined to be the collection of all sets $X$ which are ordinal-definable
and all the elements of the transitive closure of $X$ are ordinal-definable as well.
It was G\"odel who showed that $HOD$ is a definable class (definable without parameters) and
$HOD$ satisfies $\ZFC$.

The second model is the canonical Inner model for one measurable
cardinal $L[U]$. Assume that $\kappa$ is a measurable
cardinal and let $U$ be a normal, nonprincipal, $\kappa$-complete ultrafilter on $\kappa$
witnessing the measurability of $\kappa$. 

\begin{thm}
  The inner model $L[U]$ has the following properties (due to Silver (see \cite{Silver}) and Kunen (\cite{Kunen}) respectively):
 \begin{itemize}
  \item in $L[U]$ the $\GCH$ does hold.
  \item there is exactly one normal, $\kappa$-complete ultrafilter on
  $\kappa$ namely $U \cap L[U]$. Consequentially $\kappa$ is measurable
  in $L[U]$ and $U$ is ordinal definable over $L[U]$.
  
 \end{itemize}

\end{thm}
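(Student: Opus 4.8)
The plan is to establish the two items separately: the $\GCH$ by a condensation argument modeled on the classical proof that $\GCH$ holds in $L$, and the uniqueness of the normal measure by Kunen's method of iterated ultrapowers and comparison. For the $\GCH$, I would first note that the hierarchy $\langle L_\alpha[U] : \alpha \in \Ord\rangle$ is definable and yields a definable wellordering of $L[U]$, with $|L_\alpha[U]| = |\alpha|$ for infinite $\alpha$. Fix an infinite cardinal $\aleph_\beta$ of $L[U]$ and a subset $A \subseteq \omega_\beta$ with $A \in L[U]$, and pick $\theta$ large enough that $A \in L_\theta[U]$. Forming the Skolem hull $H$ of $\omega_\beta \cup \{A\}$ inside the structure $(L_\theta[U], \in, U \cap L_\theta[U])$, closed under the definable Skolem functions, gives $H \prec L_\theta[U]$ with $|H| = \aleph_\beta$.

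The decisive step is a \emph{condensation lemma}: letting $(\bar M, \in, \bar U)$ be the transitive collapse of $H$, one must show $\bar M = L_\gamma[\bar U]$ for some $\gamma < \omega_{\beta+1}$ and, crucially, that $\bar U = U \cap L_\gamma[\bar U]$, so that $A \in L_\gamma[U]$. Granting this, every $A \subseteq \omega_\beta$ in $L[U]$ appears in some $L_\gamma[U]$ with $\gamma < \omega_{\beta+1}$, and counting the $\aleph_{\beta+1}$-many such $\gamma$ (each $L_\gamma[U]$ of size $\le \aleph_\beta$) yields $2^{\aleph_\beta} \le \aleph_{\beta+1}$ in $L[U]$. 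I expect the identification of $\bar U$ to be the main obstacle here, since it is exactly where $L[U]$ departs from $L$: the collapse is automatically of the form $L_\gamma[\bar U]$, but one must verify that $\bar U$ is the correct restriction of $U$. I would split on the image $\bar\kappa$ of $\kappa$ under the inverse collapse; the case $\bar\kappa = \kappa$ is immediate, while for $\bar\kappa < \kappa$ one uses that $\bar U$ is a normal measure on $\bar\kappa$ in $L_\gamma[\bar U]$ together with the \emph{iterability} of such models to compare $L[\bar U]$ with $L[U]$ and force the coherence $\bar U = U \cap L[\bar U]$.

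For the second item, set $U_0 := U \cap L[U]$. This is a set of $L[U]$, and since $P(\kappa)^{L[U]} \subseteq P(\kappa)$, its $\kappa$-completeness and normality are inherited downward, so $U_0$ witnesses that $\kappa$ is measurable in $L[U]$. For uniqueness, let $D \in L[U]$ be an arbitrary normal $\kappa$-complete ultrafilter on $\kappa$ of $L[U]$; the goal is $D = U_0$. The method is to compare the two inner models $L[U_0] = L[U]$ and $L[D] \subseteq L[U]$, both of the form $L[\text{normal measure on }\kappa]$, by coiteration: one iterates each by its own measure and the images thereof, at each stage applying the ultrapower on the side whose critical measure is the smaller, producing elementary iteration maps into wellfounded iterates $M_\xi = L[U_\xi]$ and $N_\xi = L[D_\xi]$. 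Wellfoundedness of all iterates is a standard fact, proved via countable completeness of $U$ in $V$ (for the $U_0$-side the iteration embeds into the iteration of $V$ by $U$) together with reflection to countable iterations.

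The comparison terminates with a common iterate $M_\theta = N_\theta$: since both models are proper classes containing all the ordinals, neither can be a proper initial segment of the other, so the two sides must coincide, with equal top measures $U_\theta = D_\theta$. A supporting computation identifies each image measure on a fixed point $\lambda = \kappa_\lambda$ as the tail filter on the corresponding critical sequence, which is closed unbounded in $\lambda$, pinning down the common iterate. Tracing the coiteration back to its first step then forces $U_0 = D$: were they distinct, some $X \in U_0 \setminus D$ would give $\kappa \in j_{U_0}(X)$ but $\kappa \notin j_D(X)$, and the comparison bookkeeping shows two distinct normal measures on the same $\kappa$ cannot be iterated to a common model. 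This last bookkeeping — establishing termination of the coiteration and that the first discrepancy is impossible — is the main obstacle of this half. Finally, uniqueness renders $U_0$ ordinal-definable over $L[U]$, since it is definable as the unique normal $\kappa$-complete ultrafilter on $\kappa$, using only the ordinal parameter $\kappa$.
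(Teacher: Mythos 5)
First, a point of reference: the paper itself gives \emph{no} proof of this theorem. It is stated as background and attributed, with citations, to Silver (for $\GCH$) and Kunen (for uniqueness of the measure), so your attempt can only be judged against the classical arguments. Your overall architecture (condensation for $\GCH$, comparison by iterated ultrapowers for uniqueness) is indeed the standard one, but the $\GCH$ half contains a genuine error at exactly the spot you flag as the main obstacle. Your counting step needs the conclusion that every $A \subseteq \omega_\beta$ in $L[U]$ lies in some level $L_\gamma[U]$ with $\gamma < \omega_{\beta+1}$. This is fine when $\omega_\beta \ge \kappa$: then $\kappa$ is contained in the hull, the collapse fixes every subset of $\kappa$ in the hull pointwise, and $\bar U$ really is $U$ restricted to the collapse, so $A \in L_\gamma[U]$. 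But it is provably \emph{false} when $\omega_\beta < \kappa$, which is precisely Silver's case: every member of $U$ is an unbounded subset of $\kappa$ and hence has rank exactly $\kappa$, while every element of $L_\gamma[U]$ has rank $< \gamma$; therefore $U \cap L_\gamma[U] = \emptyset$ and $L_\gamma[U] = L_\gamma$ for all $\gamma \le \kappa$. Your claim at $\omega_\beta = \omega$ would thus give $P(\omega) \cap L[U] \subseteq L$, which fails, since $L[U]$ has a measurable cardinal, hence $0^{\#} \in L[U]$, and $0^{\#} \notin L$. Relatedly, the coherence you aim for when $\bar\kappa < \kappa$, namely $\bar U = U \cap L[\bar U]$, cannot be literally correct: $\bar U$ concentrates on subsets of $\bar\kappa$ while the members of $U$ are subsets of $\kappa$, so the two filters live on different index sets. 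Silver's actual argument accepts that the collapses are premice $L_\gamma[\bar U]$ with \emph{varying} measures $\bar U$ on \emph{varying} cardinals $\bar\kappa > \lambda$, proves these premice iterable, and then compares any two of them by iterated ultrapowers; since all critical points involved exceed $\lambda$, the comparison does not disturb $P(\lambda)$, and one concludes that the sets $P(\lambda) \cap L_\gamma[\bar U]$ are pairwise $\subseteq$-comparable. Hence $P(\lambda) \cap L[U]$ is the union of a $\subseteq$-increasing chain of sets of size $\le \lambda$, and such a chain has length $\le \lambda^+$, giving $2^\lambda = \lambda^+$. The subsets of $\lambda$ are captured in levels of \emph{different} models $L[\bar U]$, not in levels of $L[U]$ itself; replacing your level-counting by this chain-length argument is the real content of Silver's theorem.

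The uniqueness half is a faithful roadmap of Kunen's proof, but the two steps you defer as ``bookkeeping'' --- termination of the coiteration with equal final measures, and impossibility of a first disagreement --- are not bookkeeping; they are the entire proof. In particular, ``the two sides must coincide, with equal top measures'' does not follow merely from both models being proper classes containing all ordinals: that observation only rules out one side being a proper initial segment of the other, whereas what needs proof is that the comparison reaches a stage where models \emph{and} measures agree. Kunen obtains this from his representation lemma: iterating both $(L[U \cap L[U]], U)$ and $(L[D], D)$ out to a fixed regular $\theta > (2^\kappa)^+$ makes both critical sequences closed unbounded in $\theta$, and a set is measure one in an iterate if and only if it contains a tail of the corresponding critical sequence; consequently both $\theta$-th iterates are equal to $L[\mathcal{C}]$, where $\mathcal{C}$ is the club filter on $\theta$, and both image measures equal $\mathcal{C} \cap L[\mathcal{C}]$. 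Only with this identification in hand does your final pull-back ($X \in U_0 \setminus D$ gives $\kappa \in j_{U_0}(X)$ but $\kappa \notin j_D(X)$) turn into a contradiction, because one must know that the two iteration maps decide membership in the \emph{same} final measure. So: right strategy on both halves, but the $\GCH$ argument as written proves a false intermediate claim below $\kappa$, and the uniqueness argument omits the lemma that makes it close.
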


Our notation will be completely standard. $\forceP$, $\forceQ$ and $\forceR$
will denote partial orders. Two partial orders $(\forceP, <_{\forceP})$ and
$(\forceQ, <_{\forceQ})$ are isomorphic if 
there is a bijection $\pi : \, \forceP \rightarrow \forceQ$ which 
respects $<_{\forceP}$, i.e. $p_1 <_{\forceP} p_2$ if and only if
$\pi(p_1) <_{\forceQ} \pi(p_2)$.
We start to recall a property which will be crucial for the rest of this paper.

\begin{Definition}
 A partial order $\forceP$ is weakly homogeneous if for every pair $p, q \in  \forceP$ there
 is an automorphism $\pi : \forceP \rightarrow \forceP$ such that $\pi(p)$ and $q$ are compatible.
\end{Definition}

The reason of its importance stems from the following.

\begin{Fact}
 Let $\forceP$ be a homogeneous partial order and $\varphi(v_0, v_1,...,v_n)$ 
 be a formula with free variables $v_0,...,v_n$. Then for any ground model sets 
 $x_0,..., x_n \in V$
 $$\Vdash_{\forceP} \varphi(x_0,..., x_n) \text{ or } \Vdash_{\forceP} \lnot \varphi (x_0,..., x_n)$$
 holds true.
\end{Fact}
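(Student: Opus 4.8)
The plan is to reduce the statement to the standard \emph{symmetry lemma} for automorphisms of a forcing poset, combined with the observation that the parameters in question are ground model sets whose canonical names are invariant. Recall that an automorphism $\pi$ of $\forceP$ induces an action on the class of $\forceP$-names by the recursion $\pi^\ast(\tau) = \{ (\pi^\ast(\sigma), \pi(p)) : (\sigma, p) \in \tau \}$. Here it is understood that in $\Vdash_{\forceP} \varphi(x_0, \dots, x_n)$ the parameters $x_i$ stand for their canonical names $\check{x}_i$. First I would verify, by induction on the rank of names, that this action satisfies the symmetry lemma: for all $\forceP$-names $\tau_0, \dots, \tau_n$, every formula $\psi$, and every condition $p$,
$$ p \Vdash_{\forceP} \psi(\tau_0, \dots, \tau_n) \quad\Longleftrightarrow\quad \pi(p) \Vdash_{\forceP} \psi\bigl(\pi^\ast(\tau_0), \dots, \pi^\ast(\tau_n)\bigr). $$
The same induction shows that $\pi$ fixes every canonical name, i.e. $\pi^\ast(\check{x}) = \check{x}$ for all $x \in V$: the name $\check{x}$ is built using only the maximal condition $1_{\forceP}$ (equivalently, all conditions), and $\pi$ fixes $1_{\forceP}$ and permutes $\forceP$ onto itself, so the set of pairs defining $\check{x}$ is mapped to itself.

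With these two facts in hand the argument is short. Suppose toward a contradiction that neither $\Vdash_{\forceP} \varphi(x_0, \dots, x_n)$ nor $\Vdash_{\forceP} \lnot \varphi(x_0, \dots, x_n)$ holds. Then there are conditions $p, q \in \forceP$ with $p \Vdash_{\forceP} \varphi(x_0, \dots, x_n)$ and $q \Vdash_{\forceP} \lnot \varphi(x_0, \dots, x_n)$. By weak homogeneity there is an automorphism $\pi$ of $\forceP$ such that $\pi(p)$ and $q$ are compatible; fix a common extension $r \leq \pi(p), q$. Applying the symmetry lemma to $p \Vdash_{\forceP} \varphi(x_0, \dots, x_n)$ and using $\pi^\ast(\check{x}_i) = \check{x}_i$, we obtain $\pi(p) \Vdash_{\forceP} \varphi(x_0, \dots, x_n)$, hence $r \Vdash_{\forceP} \varphi(x_0, \dots, x_n)$ since $r \leq \pi(p)$. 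But $r \leq q$ gives $r \Vdash_{\forceP} \lnot \varphi(x_0, \dots, x_n)$, so $r$ forces both a statement and its negation, which is impossible. Therefore one of the two forcing statements must hold.

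The main technical obstacle is the symmetry lemma itself, whose inductive proof must carefully track how $\pi$ transforms the forced membership and equality relations between names (and, correspondingly, how $\pi^\ast$ interacts with the recursive clauses of the forcing relation for atomic formulas). Once that bookkeeping is completed, the invariance of canonical names and the weak homogeneity hypothesis combine almost immediately to yield the dichotomy; everything after the symmetry lemma is purely formal.
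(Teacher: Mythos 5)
Your proof is correct: it is the standard symmetry-lemma argument (automorphisms act on names, canonical names $\check{x}$ are fixed, weak homogeneity transports a condition forcing $\varphi$ onto one compatible with a condition forcing $\lnot\varphi$), and every step — including the passage from the failure of $\Vdash_{\forceP}\varphi$ and $\Vdash_{\forceP}\lnot\varphi$ to the existence of conditions $p \Vdash_{\forceP} \varphi$ and $q \Vdash_{\forceP} \lnot\varphi$ — is sound. The paper states this Fact without any proof, treating it as folklore, so there is no in-paper argument to compare against; yours is precisely the canonical proof the paper implicitly relies on.
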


Consequentially, if $\forceP$ is a homogeneous
notion of forcing with generic filter $G$ and $A$ is an ordinal-definable set of ordinals in the extension $V[G]$, then $A$ was already
an element of $V$. Indeed if $A$ is ordinal-definable over $V[G]$ using 
the formula $\varphi(v_0,...,v_n)$ then 
$\alpha_0 \in A$ iff $\varphi^{V[G]} (\alpha_0, \alpha_1,...,\alpha_n)$ iff 
there is a $p \in G \cap \forceP$ such that $p \Vdash_{\forceP} \varphi(\alpha_0, \alpha_1,...,\alpha_n)$
and by weak homogeneity this is equivalent to $1 \Vdash_{\forceP} \varphi(\alpha_0, \alpha_1,..., \alpha_n)$.
But the latter is an $A$-defining formula over $V$, thus $A \in V$ as desired.
If the forcing notion $\forceP$ is additionally assumed to be ordinal-definable itself, 
then an ordinal-definable subset $A$ of $V[G]$ is in fact ordinal-definable over the ground model $V$ for the
relation $\Vdash_{\forceP}$ and the weakest element $1_{\forceP}$ becomes ordinal-definable in that case.
As $\alpha_0 \in A$ iff $1 \Vdash_{\forceP} \varphi( \alpha_0,...,\alpha_n)$, 
the latter is an ordinal-definable formula in case that $\forceP$ is ordinal-definable, thus $A$, which was assumed
to be ordinal-definable over $V[G]$, is ordinal-definable
over $V$. 

We recall another standard fact, namely the club shooting forcing for a stationary $S \subset \omega_1$ which
adds a closed unbounded subset of $S$.
\begin{Definition}
 Let $S \subset \omega_1$ be a stationary subset. The club shooting forcing $\forceP_S$
 consists of closed, countable subsets of $S$ as conditions, ordered by end extension.
\end{Definition}
The definition of club shooting relies on the fact that 
stationary subsets $S$ of $\omega_1$ contain for every $\alpha < \omega_1$
a closed subset $C \subset S$ of ordertype $\alpha$. This 
statement becomes wrong for regular cardinals $\kappa > \aleph_1$. 
The next fact is a wellknown folklore result
and we state it without a proof:

\begin{Fact}
 For a stationary set $S$, the club shooting $\forceP_S$ is 
 $\omega$-distributive and weakly homogeneous.
\end{Fact}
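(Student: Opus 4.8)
The plan is to establish the two properties of $\forceP_S$ separately, since they rely on rather different ideas.

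\textbf{$\omega$-distributivity.}
The goal here is to show that $\forceP_S$ adds no new $\omega$-sequences of ordinals, equivalently that the intersection of countably many dense open sets is dense. First I would fix a condition $p \in \forceP_S$ together with a countable collection $\{D_n : n \in \omega\}$ of dense open subsets, and attempt to construct a single condition $q \leq p$ lying in every $D_n$. The natural approach is to build a descending $\omega$-chain of conditions $p = p_0 \geq p_1 \geq p_2 \geq \cdots$ with $p_{n+1} \in D_n$, arranging that the suprema $\delta_n = \sup(p_n)$ form a strictly increasing sequence converging to some countable ordinal $\delta$. The candidate lower bound is $q = \bigcup_n p_n \cup \{\delta\}$. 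The point where the stationarity of $S$ enters is exactly in verifying that this $q$ is a legitimate condition: $q$ is closed and countable by construction, but we must ensure $\delta \in S$ so that $q \subseteq S$. This is the main obstacle, and it is overcome by reflecting on the construction: rather than choosing the $p_n$ blindly, one carries out the construction inside a countable elementary submodel $N \prec H_\theta$ (for suitable large $\theta$) containing $p$, $S$, and the sequence $\langle D_n : n \in \omega\rangle$, with $\delta = N \cap \omega_1$. Since $S$ is stationary, the set of such $\delta$ that land in $S$ is stationary, so we may choose $N$ with $\delta = N \cap \omega_1 \in S$. Working inside $N$ one builds the chain so that it is cofinal in $\delta$ and meets each $D_n$; then $q = \bigcup_n p_n \cup \{\delta\}$ is closed, countable, contained in $S$, and extends $p$ while meeting every $D_n$.

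\textbf{Weak homogeneity.}
For this part I would fix two conditions $p, q \in \forceP_S$ and seek an automorphism $\pi$ of $\forceP_S$ with $\pi(p)$ compatible with $q$. The idea is that conditions are closed countable subsets of $S$, so their only relevant data is their order structure below their maximum; two conditions can be rendered compatible once their initial segments are made to agree. Concretely, one uses that $S$ is unbounded in $\omega_1$ together with a back-and-forth style rearrangement: any order-preserving bijection of an initial segment of $\omega_1$ that fixes a tail and preserves membership in $S$ induces an automorphism of $\forceP_S$ by pointwise application to conditions. The task is to produce such a rearrangement sending $p$ to a condition whose elements, above the common part, are disjoint from $q$'s, so that $\pi(p) \cup q$ (suitably adjusted at the top) is again a closed subset of $S$ and hence a common extension. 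Since $S$ is stationary, hence uncountable, there is always enough room in $S$ above any countable ordinal to relocate the finitely-or-countably many points of $p$ away from $q$; this flexibility is what makes the weak homogeneity argument go through.

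Overall, the harder of the two is $\omega$-distributivity, precisely because it is where one must genuinely exploit stationarity (via an elementary-submodel reflection) to guarantee that the limit ordinal $\delta$ capping off the constructed chain lands inside $S$; without that, the union of the chain need not be a condition.
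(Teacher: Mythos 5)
Your $\omega$-distributivity argument is correct and is the standard one (the paper states this Fact as folklore, without proof): fixing a countable $N \prec H_\theta$ containing $p$, $S$, $\langle D_n : n\in\omega\rangle$ with $\delta = N\cap\omega_1 \in S$, building the chain inside $N$ cofinally in $\delta$, and capping with $\delta$ is exactly what makes $q=\bigcup_n p_n \cup\{\delta\}$ a condition.

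The weak homogeneity half, however, contains two genuine errors. First, the automorphisms you describe do not exist, except for the identity: well-orders are rigid, so any order-preserving bijection of an initial segment of $\omega_1$ onto itself (in particular one that fixes a tail of $\omega_1$) is the identity map. Hence ``pointwise application'' of such maps yields only the trivial automorphism of $\forceP_S$, and no rearrangement of the kind you propose can move $p$ at all. This is not a repairable technicality: if you drop order preservation to get nontrivial bijections, pointwise application no longer sends closed sets to closed sets nor preserves the end-extension order. Second, and independently, your compatibility target is wrong. The order on $\forceP_S$ is end extension, so a common extension $r$ of $\pi(p)$ and $q$ must have both of them as initial segments; consequently two conditions are compatible \emph{if and only if one is an initial segment of the other}. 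Arranging that $\pi(p)$ and $q$ are disjoint above their common part makes them \emph{incompatible}, and $\pi(p)\cup q$, even when it is a closed subset of $S$, end-extends neither of them, since each has points below the other's maximum that the other lacks. You are implicitly treating $\forceP_S$ as if it were ordered by reverse inclusion, where compatibility would indeed amount to the union being a condition; that is a different forcing.

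The standard folklore argument goes differently: given $p,q$, use unboundedness of $S$ to extend them to $p'\le p$ and $q'\le q$ with $\max p' = \max q' = \delta\in S$; then the cones $\{r : r\le p'\}$ and $\{r : r\le q'\}$ are isomorphic via $p'\cup s \mapsto q'\cup s$, because the admissible tails $s$ (countable subsets of $S$ above $\delta$ with $\{\delta\}\cup s$ closed) are the same on both sides. This ``cone homogeneity'' is what every application of the Fact in the paper actually uses: it already implies that no condition can force an ordinal-parameter statement while another forces its negation, hence that ordinal-definable sets of ordinals in the extension lie in the ground model. Producing a literal automorphism of all of $\forceP_S$, as the paper's definition of weak homogeneity formally demands, is genuinely more delicate (note that naively swapping the two cones and fixing everything else is \emph{not} an automorphism, since it breaks the order relations with the proper initial segments of $p'$ and $q'$), and nothing in your sketch addresses this.
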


\section{The proof}

Goal of this section is a proof of the following theorem.
\begin{thm}
 The existence of measurable cardinal is equiconsistent
 to the existence of a model of $\ZFC$ in which there is no
 ordinal-definable, stationary, costationary subset of $\omega_1$.
\end{thm}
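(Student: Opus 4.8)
The plan is to establish the two directions of the equiconsistency separately. For the upper bound I would start from a model of $\ZFC$ with a measurable cardinal $\kappa$ and pass to the canonical inner model $L[U]$, where by the theorem of Silver and Kunen cited above the $\GCH$ holds, $\kappa$ remains measurable, and the normal measure $U = U \cap L[U]$ is ordinal-definable. The strategy is to force over $L[U]$ with an ordinal-definable, weakly homogeneous notion $\forceP$ that collapses $\kappa$ to become $\omega_1$ and simultaneously makes every set in $U$ contain a club subset of the new $\omega_1 = \kappa$. Since $U$ is an ultrafilter on the ground-model subsets of $\kappa$, once every $U$-set contains a club every ground-model $S \subseteq \kappa$ is decided: if $S \in U$ then $S$ contains a club, and if $S \notin U$ then $\kappa \setminus S \in U$ contains a club, so $S$ is nonstationary. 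Thus in $L[U][G]$ no ground-model subset of $\omega_1$ is simultaneously stationary and costationary.

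To close the argument I would invoke the homogeneity machinery recalled in the introduction. If $\forceP$ is arranged to be weakly homogeneous and ordinal-definable over $L[U]$, then by the Fact above every ordinal-definable subset $A$ of $\omega_1$ in the extension $L[U][G]$ is already ordinal-definable over $L[U]$, hence is a ground-model subset of $\kappa$. Combined with the previous paragraph, this yields that every ordinal-definable $A \subseteq \omega_1^{L[U][G]}$ either contains a club or is nonstationary, which is exactly the desired conclusion. The ordinal-definability of $\forceP$ is inherited from that of $U$ over $L[U]$, so the construction must route all of its choices through $U$ and the canonical wellorder of $L[U]$ rather than through arbitrary parameters.

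The technical heart, and the step I expect to be hardest, is the design and verification of $\forceP$. It should be an iteration, of length related to $\kappa$ and with a support that keeps it weakly homogeneous, interleaving Levy-style collapses that drive $\kappa$ down to $\omega_1$ with club-shooting forcings $\forceP_S$ aimed at the members of $U$, organised by a bookkeeping that uses the $\GCH$ in $L[U]$ to catch every relevant $U$-set. Two preservation demands must be met at once: the iteration must not collapse $\kappa$ itself, so that $\kappa = \omega_1^{L[U][G]}$, and shooting a club through one $U$-set must never destroy the stationarity of another set we still intend to club. The decisive point is that, by the trichotomy above, every club we shoot is shot through a member of $U$; since $U$ is a single $\kappa$-complete normal filter all these targets are mutually compatible, and normality, via closure under diagonal intersection, is what lets the clubs be shot coherently without conflict. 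Verifying that the collapses and the club-shooting together preserve $\kappa$ and the stationarity of the $U$-sets, while the whole iteration stays weakly homogeneous and ordinal-definable, is the main obstacle.

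For the lower bound I would argue via the core model and the Covering Lemma. Suppose $W \models \ZFC$ has no ordinal-definable stationary, costationary subset of $\omega_1$, and assume toward a contradiction that there is no inner model with a measurable cardinal. Then the Dodd--Jensen core model $K$ exists, is definable without parameters, carries a definable wellorder, and the Covering Lemma holds over $K$. Using the definable wellorder one defines, without parameters, a canonical partition of a tail of the ordinals below $\omega_1^W$ into two sets that are stationary from the point of view of $K$, for instance by reading off the cofinality or fine-structural data that $K$ assigns to each ordinal. The Covering Lemma is then used to transfer this $K$-stationarity upward to genuine stationarity in $W$: it controls cofinalities and $K$-successor cardinals closely enough that a $K$-club cannot disappear and a $K$-stationary set cannot become nonstationary in $W$. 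The resulting set is ordinal-definable in $W$ and both stationary and costationary there, contradicting the hypothesis; hence an inner model with a measurable exists, which gives $\mathrm{Con}(\ZFC + \exists\,\text{measurable})$. The delicate point in this direction is exactly the cofinality bookkeeping guaranteeing that the $K$-definable set survives as stationary and costationary in $W$, and it is here that the absence of a measurable inner model, hence full covering, is essential.
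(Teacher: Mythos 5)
Your upper-bound direction is, in substance, the paper's own argument: work over $L[U]$, make $\kappa$ into $\omega_1$, shoot clubs through the members of $U$, use normality of $U$ (diagonal intersections) to keep the club-shooting product $\omega$-distributive, and use weak homogeneity plus ordinal-definability of the forcing to pull every ordinal-definable set of the extension back into the ground model, where $U$ decides it. The paper organizes this as two clean steps (Levy collapse first, then a countable-support \emph{product} of club shootings indexed by the ordinal-definable stationary, costationary sets lying in $U$, enumerated via $<_{L[U]}$), whereas you propose an interleaved iteration aimed at all of $U$; that variant can be made to work, but your phrasing hides a real constraint: the club-shooting forcing with countable closed conditions only makes sense \emph{after} $\kappa$ has become $\omega_1$ (as the paper notes, stationary subsets of a regular $\kappa>\aleph_1$ need not contain closed sets of every countable order type), so the collapse must be completed before any club shooting starts, not interleaved with it.

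The genuine gap is in your lower bound. Your argument rests on the transfer principle that, under covering, ``a $K$-stationary set cannot become nonstationary in $W$,'' and this is false. Club shooting itself refutes it: start with $K$ (no inner model with a measurable), collapse $\omega_1^K$ to $\omega$, and then shoot a club through $T=\{\alpha<\omega_2^K : \mathrm{cf}^K(\alpha)=\omega_1^K\}$, which is still stationary after the small collapse. The resulting model $W$ has no inner model with a measurable (set forcing cannot create one, and $K$ with covering is forcing-absolute), yet the $K$-stationary set $\{\alpha<\omega_1^W : \mathrm{cf}^K(\alpha)=\omega\}$ is now nonstationary in $W$. So your canonical ``read off the $K$-cofinality'' set is not provably stationary and costationary. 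The recipe also degenerates outright in the case $\omega_1^W=\omega_1^K$ (e.g.\ $W=K$): every limit ordinal countable in $K$ has $K$-cofinality $\omega$, so your set simply contains a club.

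The paper's lower bound needs none of this machinery and you should compare it with what you attempted: if $W$ has no ordinal-definable stationary, costationary subset of $\omega_1$, then the club filter $U$ of $W$ is ordinal-definable, hence $U\cap HOD\in HOD$, and the hypothesis says exactly that $U\cap HOD$ is an ultrafilter on $P(\omega_1)\cap HOD$; it is nonprincipal and $\omega_1^W$-complete because intersections of fewer than $\omega_1^W$ many clubs are club, so $\omega_1^W$ is measurable in $HOD$, and $HOD\models\ZFC$. If you insist on a $K$-based route, the correct repair is structurally the same dichotomy, not a stationarity transfer: either some set in $P(\omega_1)\cap K$ is stationary and costationary in $W$, in which case the $<_K$-least such set is ordinal-definable in $W$ (contradiction), or else the club filter of $W$ measures every set in $P(\omega_1)\cap K$, giving a countably complete external ultrafilter over $K$, which by Dodd--Jensen core model theory yields an inner model with a measurable. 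That works, but it is the $HOD$ argument with heavy machinery substituted where none is needed; $HOD$ is the right inner model here precisely because ordinal-definable sets are what the hypothesis controls.
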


\begin{proof}

One direction of the equiconsistency is observed easily.
Assume that $V$ is a model of $\ZFC$ in which there is no
ordinal-definable, stationary, costationary subset of $\omega_1$.
Consider the club filter of $V$, $U:= \{ X \subset \omega_1 \, : \, X \text{ contains a club} \}$.
It is clear that $U$ is ordinal-definable and hence $U \cap HOD$ is an element of $HOD$, the 
class of hereditarily ordinal definable subsets of $V$. As any $V$-ordinal-definable subset of
$\omega_1$ is either club containing or nonstationary we see that $U \cap HOD$ is
a nontrivial ultrafilter in $HOD$. As any subset $X \in HOD$ of $\omega_1$ can easily
be split into two proper subsets which are elements of $HOD$, it follows that
$U \cap HOD$ is also nonprincipal.
Finally, as the intersection of less than $\omega_1^V$-many club sets are club again,
we see that $U \cap HOD$ is an $\omega_1^V$-closed ultrafilter on $\omega_1^V$ in $HOD$, hence $U \cap HOD$ witnesses that $\omega_1^V$ is 
measurable in $HOD$.

The other direction is more involved and we start to prove it now. Assume there is a measurable
cardinal $\kappa$, and let the ultrafilter $U$ witness this. We let $V=L[U]$ be our ground model, 
which satisfies $\GCH$ and has $\kappa$ as a measurable cardinal, with $U$ being the unique
normal $\kappa$-closed ultrafilter witnessing the measurability of $\kappa$ in $L[U]$.
Note that $U$ is ordinal definable in $L[U]$ as it is the unique
ultrafilter on $\kappa$ which is $\kappa$-closed and normal.
Further note that $U$ is also ordinal definable in all set-sized 
generic forcing extensions $L[U][G]$ of $L[U]$, it is just
the unique set $X$ such that $X$ is a normal, $\kappa$-complete ultrafilter on $\kappa$ in $L[X]$.

\secret{
As a first step we want to add generically an ordinal-definable wellorder of $P(\kappa)$ to $L[U]$.
For this we fix some wellorder $<$ of $P(\kappa)$ and define a reverse Easton product 
of length $\kappa^+$ of Cohen forcings $\mathbb{C}(\lambda^{+i})$ 
where $(\lambda^{+i})_{i \in \kappa^+}$ is the sequence of $L[U]$-cardinals bigger than $\lambda:= \kappa^{++}$ of length $\kappa^+$, 
which codes the wellorder $<$ into the $\GCH$ pattern starting from $\lambda$.

More particularly we let $\forceP$ be an iteration of length $\kappa^+$, 
$\forceP = (\forceP_{\alpha}, \dot{\forceQ}_{\alpha})_{\alpha < \kappa^+}$.
We write the fixed wellorder $<$ as a sequence $(A_i \, : \, i \in \kappa^+)$ for $A_i \subset \kappa$ and
identify every $A_i$ with its characteristic function $\chi_{A_i}$ and $<$ with $\chi$, which sould be the concatenated 
sequence of the $\chi_{A_i}$'s.
The iteration is defined recursively, suppose we are at stage $\alpha < \kappa^+$
and suppose that $\forceP_{\alpha}$ is already defined. 
We consider the $\alpha$'th entry of $\chi(\alpha)$ and
let $\dot{\forceQ}_{\alpha}$ be the Cohen forcing $\mathbb{C}(\lambda^{+\alpha})$ if
$\chi(\alpha)=1$ and the trivial forcing otherwise.
We use a reverse Easton support for the limit stages.
In the end we arrive at a generic extension $L[U][G_0]$
which allows an ordinal definable description of the wellorder $<$ of $P(\kappa)$.
The definition of the wellorder should just say that two subsets of $\kappa$, $A_0$ and $A_1$ are in 
the $<$-relation if and only if there is a cardinal $\lambda_0$ such that 
for every $\alpha < \omega_1$ it holds that $\alpha \in A_0$ iff $L[U][G_0] \models 2^{\lambda_0^{+\alpha}} \ne \lambda_0^{+(\alpha+1)}$
and there is a cardinal $\lambda_1$ such that for every
$\alpha < \omega_1$ it holds that $\alpha \in A_1$ iff $L[U][G_0] \models 2^{\lambda_1^{+\alpha}} \ne \lambda_1^{+(\alpha+1)}$
and $\lambda_0 < \lambda_1$.
Note that this definition is ordinal definable over $L[U][G_0]$.
Further note that the forcing $\forceP$ as defined is a product of weakly homogeneous forcings and 
therefore weakly homgeneous itself.}

In the next step we take $L[U]$ as the ground model and collapse the cardinal $\kappa$ to $\omega_1$ using
the Levy collapse. More particularly we let $\forceQ$ consist of functions
$p$ on subsets of $\kappa \times \omega$ such that
\begin{enumerate}
 \item $|dom(p)|$ is finite and
 \item $p(\alpha, n ) < \alpha$ for every $(\alpha, n) \in dom(p)$.
\end{enumerate}
It is wellknown that the Levy collapse
has the $\kappa$-cc, hence if $G_0$ denotes the generic filter for 
$\forceQ$ we obtain that $L[U][G_0] \models \kappa = \aleph_1$.
As a consequence of the $\kappa$-cc of $\forceQ$, every stationary subset of $\omega_1$ in $L[U][G_0]$
will contain an $S' \subset \kappa$ which is a stationary element of $L[U]$.
The Levy collapse is a weakly homogeneous forcing, which yields that every ordinal-definable
stationary, costationary $S \subset \omega_1$ is in fact an elment of $L[U]$ already.
Indeed if there is a formula $\varphi(x, \beta_0,..., \beta_n)$ such that $\alpha \in S$ if and only if $L[U][G_0] \models \varphi(\alpha, \beta_0,...,\beta_n)$
then the latter is equivalent to $L[U] \models \Vdash_{\forceQ} \varphi(\alpha, \beta_0,...,\beta_n)$ witnessing the ordinal-definability
of $S$ over $L[U]$ already.
Note however that $S$ will be a stationary, costationary subset of $\kappa$ in $L[U]$ as we have not 
collapsed $\kappa$ in $L[U]$.

We are now in the position to define the final iteration. Let $L[U][G_0]$ be the 
ground model. In $L[U][G_0]$, list the ordinal-definable, stationary, costationary subsets of $\omega_1$, write the list as $(S_i \, : \, i < \omega_2)$.
We can assume that this enumeration is ordinal definable over $L[U][G_0]$ as every ordinal-definable, stationary, costationary $S \subset \omega_1$
in $L[U][G_0]$ is, by the homogeneity of the Levy collapse already an element of 
$L[U]$ and hence we can use the canonical wellorder $<_{L[U]}$, which is ordinal definable over $L[U][G_0]$, to wellorder the $S_i$'s.
Let $\forceR:= (\forceR_{\alpha} \, : \, \alpha < \omega_2)$ be an $\omega_2$-length product defined inductively as follows:
suppose we are at stage $\alpha < \omega_2$, and $\forceR_{\alpha}$ has already been defined. Suppose
further that $S_{\alpha}$ is an ordinal-definable, stationary, costationary subset of
$\omega_1$ in $L[U][G_0]$.
Note here again that even though $\kappa$ has been collapsed to $\omega_1$ in $L[U][G_0]$,
by the homogeneity of the Levy collapse, if $S_{\alpha}$ is an ordinal-definable set in 
$L[U][G_0]$ it is also an element of $L[U]$. As $U$ is ordinal-definable over $L[U][G_0]$, 
the question whether $S_{\alpha}$ is an element of $U$ is therefore definable in 
$L[U][G_0]$, hence we can define the $\alpha$-th forcing of the product 
as follows:
\begin{itemize}
 \item If $S_{\alpha}$ is stationary in $L[U][G_0]$ and an element of $U$ we force with the club shooting 
forcing $\forceR_{\alpha} :=\forceP_{S_{\alpha}}$ which shoots a club set through $S_{\alpha}$.
 \item If $S_{\alpha}$ is not an element of $U$ we do nothing. 
\end{itemize}
We use countable support for the product, which is ordinal definable over $L[U][G_0]$.
Let $G_1$ be a generic filter for the product $(\forceR_{\alpha} \, : \, \alpha < \omega_2)$.
To finish the proof we need to show two things, namely that
$\omega_1^{L[U][G_0][G_1]} = \omega_1^{L[U][G_0]}$
and that in $L[U][G_0][G_1]$ there are no ordinal-definable, stationary, costationary subsets of $\omega_1$.

\begin{Claim}
 The product $(\forceR_{\alpha} \, : \, \alpha < \omega_2)$ is $\omega$-distributive
 and therefore preserves $\aleph_1$.
 \end{Claim}
\begin{proof}
 This is proved by induction on $\alpha < \omega_2$. First note that for a stationary set $S \subset \omega_1$, 
 club shooting itself is an $\omega$-distributive forcing which immediately gives the successor case.
 For limit ordinals $\alpha$, note that $\forceR_{\alpha}$ is the countably supported
 product of certain club shooting forcings $\prod_{\beta< \alpha} \forceP_{S_{\beta}}$. 
 We can assume without loss of generality that $\alpha = \omega_1$ 
 as we can pick a bijection $f : \alpha \rightarrow \omega_1$
 and use the fact that $\prod_{\beta < \alpha} \forceP_{S_{\beta}}$ is isomorphic to
 $\prod_{f(\beta) < f(\alpha)} \forceP_{S_{f(\beta)}}$.
 By definition, we only used a club shooting forcing $\forceP_{S_{\beta}}$ for an $S_{\beta}, \beta < \omega_1$ 
 if $S_{\beta}$ was an element of the ultrafilter $U$. As $U$ is normal and hence closed under diagonal intersections,
 $\Delta_{\beta < \omega_1} S_{\beta}$ is an element of $U$ again. Moreover elements of $U$ are stationary subsets of $\kappa$ in 
 $L[U]$, and by the $\kappa$-cc of the Levy collapse they remain stationary in $L[U][G_0]$, so $\Delta_{\beta< \omega_1} S_{\beta}$ is 
 a stationary subset of $\omega_1$ in $L[U][G_0]$.
 Let $\dot{r}$ be the $\forceR_{\omega_1}$-name of a real in $L[U][G_0]$. We shall show that there is a condition
 $q \in \forceR_{\omega_1}$ and a real $r \in L[U][G_0]$ such that $q \Vdash \dot{r}=r$.
 Fix a countable model $M \prec H_{\theta}$ for $\theta $ sufficiently large and which contains $\dot{r}$.
 As $\Delta_{\beta < \omega_1} S_{\beta}$ is stationary in $L[U][G_0]$ we can assume that $M \cap \omega_1 \in \Delta_{\beta < \omega_1} S_{\beta}$.
 We list the dense subsets $(D_n \, : \, n \in \omega)$ of $\forceR_{\omega_1}$ which are in $M$
 and recursively construct a descending sequence of elements of $\forceR_{\omega_1} \cap M$, $(p_n \, : \, n \in \omega)$,
 such that $p_n \in D_n$ and $p_{n+1}$ decides the value of $\dot{r} (n)$ and such that for every
 coordinate $i \in \omega_1$ such that there is an $n \in \omega$ for which $p_n (i) \ne 1$, the 
 value $sup_{n \in \omega} p_n (i) \cap \omega_1 = M \cap \omega_1$.
 We claim that the lower bound for the sequence $(p_n \, : \, n \in \omega)$, taken coordinatewise,
 i.e. $q:= \prod_{\beta \in \omega_1} \bigcup_{n \in \omega} p_n (\beta)$ 
 is a condition in $\forceR_{\omega_1}$. Indeed every nontrivial coordinate $i$ of a
 condition $p_n$ in the descending sequence is a countable, closed subset of the ordinal-definable, stationary, costationary 
 set $S_i$. Thus it is sufficient to show that for every coordinate $i$ in the support of $q$, the set $\bigcup_{n \in \omega} p_n(i)$
 is a closed subset of $S_i$, which boils down to ensure that $sup_{n \in \omega} (p_n (i) \cap \omega_1)$ is an element of 
 $S_i$. But $(p_n \, : \, n \in \omega) \subset M$ and $M \cap \omega_1 \in \Delta_{\beta < \omega_1} S_{\beta}$, and as $supp(q) \subset M \cap \omega_1$
 we know that for every $i \in supp(q)$ it must hold that
 $sup_{n \in \omega} (p_n (i) \cap \omega_1) = M \cap \omega_1$ is an element of $S_i$. So $q$ is a condition in $\forceR_{\omega_1}$ which
 witnesses that the name of a real $\dot{r}$ is already an element of $L[U][G_0]$.
 
 To show that the full product $\forceR=\prod_{\alpha < \omega_2} \forceR_{\alpha}$ is $\omega$-distributive
 note that in our ground model $L[U][G_0]$ the club shooting forcings have size $\aleph_1$. As a consequence
 every $\forceR$-name of a real $\dot{r}$ is in fact a name in some initial segment $\prod_{\beta < \alpha} \forceR_{\beta}$ 
 for $\alpha < \omega_2$. But we have seen already that $\prod_{\beta < \alpha} \forceR_{\beta}$ is $\omega$-distributive, so we are finished.

\end{proof}

What is left is to show that in $L[U][G_0][G_1]$ there are no ordinal-definable, stationary, costationary
subsets of $\omega_1$. Assume for a contradiction that $S$ would be such a set in $L[U][G_0][G_1]$.
Then as $\forceR$ is the product of homogeneous forcings it is homogeneous itself and so
$S$ is an element of $L[U][G_0]$. The forcing $\forceR$ is ordinal-definable over $L[U][G_0]$ as
the sequence of the ordinal-definable, stationary, costationary subsets of $\omega_1$ in $L[U][G_0]$
is itself ordinal definable there, using the canonical (ordinal-definable) wellorder of $L[U]$.
Indeed, working in $L[U][G_0]$ we can first define $U$ to be the unique $\kappa$-closed and normal ultrafilter $X$ on $\kappa$
which is an element of $L[X]$, and having defined $U$ we can also define the canonical $L[U]$-wellorder in 
an ordinal-definable way, which we use to define (with $\omega_1= \kappa$ as the parameter) the sequence of ordinal-definable, stationary, costationary 
subsets of $\omega_1$ in $L[U][G_0][G_1]$. But as $\forceR$ is a homgeneous, ordinal-definable forcing over $L[U][G_0]$
we see that any ordinal definable set $S$ over $L[U][G_0][G_1]$ is in fact ordinal definable over $L[U][G_0]$:
if $\varphi(x, \alpha,..)$ is such that $\beta \in S$ if and only if $L[U][G_0][G_1] \models \varphi(\beta, \alpha,...)$
then the latter is equivalent to $L[U][G_0] \models 1\Vdash_{\forceR} \varphi (\check{\beta}, \check{\alpha},...)$
and the relation $\Vdash_{\forceR}$ and the weakest condition 1 are ordinal definable.
To summarize, an ordinal-definable, stationary, costationary element $S$ of $L[U][G_0][G_1]$ is
ordinal-definable, stationary, costationary already in $L[U][G_0]$. But we have killed the stationarity or costationarity of every 
such set with the forcing $\forceR$. So there is no $S$ which is ordinal-definable and stationary and costationary in $L[U][G_0][G_1]$ as desired.

\end{proof}

\end{document}